\newtheorem{theorem}{Theorem}[section]
\newtheorem{lemma}[theorem]{Lemma}
\newtheorem{proposition}[theorem]{Proposition}
\newtheorem{corollary}[theorem]{Corollary}
\theoremstyle{definition}
\theoremstyle{remark}
\newtheorem{remark}[theorem]{Remark}
\numberwithin{equation}{section}
\begin{document}

\title[A variance bound for a general function]{A variance bound for a general function of independent noncommutative random variables}

\author[A. Talebi and M.S. Moslehian] {Ali Talebi and Mohammad Sal Moslehian}

\address{ Department of Pure Mathematics, Center of Excellence in Analysis on Algebraic Structures (CEAAS), Ferdowsi University of
Mashhad, P.O. Box 1159, Mashhad 91775, Iran.}
\email{alitalebimath@yahoo.com}
\email{moslehian@um.ac.ir and moslehian@member.ams.org}

\subjclass[2010]{Primary 46L53; Secondary 60E15.}

\keywords{Efron--Stein inequality; Random matrix; Noncommutative probability; Trace; Conditional expectation.}

\begin{abstract}
The main purpose of this paper is to establish a noncommutative analogue of the Efron--Stein inequality, which bounds the variance of a general function of some independent random variables. Moreover, we state an operator version including random matrices, which extends a result of D. Paulin et al. [Ann. Probab. 44 (2016), no. 5, 3431--3473]. Further, we state a Steele type inequality in the framework of noncommutative probability spaces.
\end{abstract} \maketitle
%
%
\section{Introduction and Preliminaries}

Assume that $Z = f\left(X_1, X_2, \ldots, X_n \right)$ is a symmetric function of independent random variables $X_1, X_2, \ldots, X_n$ in a probability space $\left( \Omega, \mathcal{F}, \mathbb{P}\right)$. Efron and Stein investigated the Tukey jackknife estimate of variance for the statistic $Z$; cf. \cite{ES, STE}. They proved that
\begin{equation*}
{\rm var}(Z) \leq \frac{1}{2} \sum_{j=1}^n \mathbb{E}\left[\left(Z - Z_j^\prime \right)^2\right],
\end{equation*}
where $X_1^\prime, X_2^\prime, \ldots, X_n^\prime$ are independent copies of $X_1, X_2, \ldots, X_n$, respectively, and\\
$Z_j^\prime = f\left(X_1, \ldots, X_j^\prime, \ldots, X_n\right)$.

Next, Steele \cite{STE} established a new version of the Efron--Stein inequality. He proved that with the above notation, if $Z_j = f_j \left(X_1, \ldots, X_{j-1}, X_{j+1}, \ldots, X_n\right)$, where $f_j$ is a measurable function of $n-1$ variables, then
\begin{equation*}
{\rm var}(Z) \leq \frac{1}{2} \sum_{j=1}^n \mathbb{E}\left[\left(Z - Z_j \right)^2\right].
\end{equation*}
In this paper, we present a noncommutative Efron--Stein inequality, which improves the classical Efron--Stein inequality for random variables. Further, we state a Steele type inequality in the setting of noncommutative probability spaces.
Recently, Paulin et al. \cite{Pa} established some Efron--Stein inequalities describing the concentration properties of a matrix-valued function of independent random variables. In addition, we establish a matrix version of our noncommutative Efron--Stein inequality.
Throughout the paper, $(\mathcal{M}, \tau)$ denotes a noncommutative probability space, that is, a von Neumann algebra $\mathcal{M}$ equipped with a normal faithful finite trace $\tau$ with $\tau(1)=1$, where $1$ stands for the identity of $\mathcal{M}$. We denote the self-adjoint elements of $\mathcal{M}$ by $\mathcal{M}_{sa}$.
Let $1 \leq p < \infty$. The Banach space $L_p(\mathcal{M})$ is the completion of $\mathcal{M}$ with respect to the $p$-norm $\|x\|_p:=\tau(|x|^p)^{1/p}$. The elements of $L_1\left(\mathcal{M} \right)$ are called (noncommutative) random variables.\\
Let $x$ be a normal random variable and $e^x$ be the unique spectral measure on the Borel subsets $\mathfrak{B}(\mathbb{C})$ of the complex plane $\mathbb{C}$.\\
Assume that $\mathcal{N}$ is a von Neumann subalgebra of $\mathcal{M}$. Then there exists a map $\mathcal{E}_{\mathcal{N}}: \mathcal{M} \longrightarrow \mathcal{N}$, satisfying the following properties:
\begin{enumerate}
\item[(i)] $\mathcal{E}_{\mathcal{N}}$ is normal positive contractive projection from $\mathcal{M}$ onto $\mathcal{N}$;
\item[(ii)] $\mathcal{E}_{\mathcal{N}}(axb) = a \mathcal{E}_{\mathcal{N}}(x) b$ for every $x \in \mathcal{M}$ and $a, b \in \mathcal{N}$;
\item[(iii)] $\tau \circ \mathcal{E}_{\mathcal{N}} = \tau$.
\end{enumerate}
Moreover, $\mathcal{E}_{\mathcal{N}}$ is the unique map verifying {\rm (ii)} and {\rm (iii)}.
It is known that $\mathcal{E}_{\mathcal{N}}$ can be extended to a contractive positive projection, denoted by the same $\mathcal{E}_{\mathcal{N}}$, from $L_p(\mathcal{M})$ onto $L_p(\mathcal{N})$, named the conditional expectation with respect to $\mathcal{N}$.
The reader is referred to \cite{Bek, sm1, sm2} and references therein for more information on noncommutative probability spaces.

Let $d \in \mathbb{N}$ and $\mathbb{M}_d (\mathbb{C}) \otimes \mathcal{M} \cong \mathbb{M}_d ( \mathcal{M} )$ be the algebra of all $d \times d$ matrices with entires in $\mathcal{M}$. Then the trace $\overline{\tau}$ is defined by
\begin{eqnarray*}
\overline{\tau} \left( X \right) = \tau\left( \overline{{\rm tr}}(X) \right) := \frac{1}{d} \sum_{i=1}^d \tau (X_{ii}).
\end{eqnarray*}
It is a normalized normal faithful finite trace (see \cite[Proposition 2.14]{Tak}) on $\mathbb{M}_d ( \mathcal{M} )$, where $X = \left(X_{ij} \right)_{d \times d}$ and $\overline{{\rm tr}} = \frac{1}{d} {\rm tr}$.\\
Similarly, let $\mathbb{M}_d ( \mathcal{N} )$ denote the subalgebra of $d \times d$ matrices with entires in $\mathcal{N}$. Then $\mathbb{M}_d ( \mathcal{N} )$ is a von Neumann subalgebra of $\mathbb{M}_d ( \mathcal{M} )$ and the corresponding conditional expectation $\mathbb{E} = I_{\mathbb{M}_d} \otimes \mathcal{E}_{\mathcal{N}}$ is given by
$
\mathbb{E}\left( X \right) = \left( \mathcal{E}_{\mathcal{N}} \left( X_{ij} \right) \right)_{d \times d}.
$
Note that $\tau (X)$ is defined by $\left( \tau \left(X_{ij} \right) \right)_{d \times d}$.

\section{Noncommutative Efron--Stein inequality}

Throughout this section, for von Neumann subalgebras $\mathcal{N}_1, \mathcal{N}_2, \ldots, \mathcal{N}_n$ of $\mathcal{M}$ let $\mathcal{E}_j$ and $\mathcal{E}_j^{\prime}$ denote the conditional expectations on $\mathcal{M}$ with respect to $W^*\left(\bigcup_{k \neq j} \mathcal{N}_k\right)$ and $W^*\left(\bigcup_{k=1}^j \mathcal{N}_k\right)$, respectively, where by $W^*(S)$ we mean the von Neumann algebra generated by $S$ for a subset $S\subset \mathcal{M}$. We use the notation $W^*(x)$ for the von Neumann subalgebra $W^*\left(\{e^x(B) : B \in \mathfrak{B}(\mathbb{C}) \}\right)$ of $\mathcal{M}$.\\

The variance of an element $x \in \mathcal{M}$ is defined by ${\rm var}(x) := \tau\left( (x-\tau(x))^2 \right)$. The following formula seems to be known in the literature:
\begin{equation*}
{\rm var}(x) = \inf_{\lambda \in \mathbb{R}} \tau\left( (x - \lambda 1)^2 \right).
\end{equation*}
Recall that independence, is one of the main notions in the classical probability. In the noncommutative probability setting, the concept of independence also plays an essential role. There are more noncommutative notions of independence; cf. \cite{VDN} and see also \cite{Jia}. The condition $\tau(xy)=\tau(x)\tau(y)$ in the next proposition is known as Boolean independence \cite{L}. More precisely, two subalgebras $\mathcal{N}_1$ and $\mathcal{N}_2$ are said to be Boolean independent if $\tau(ab)=\tau(a)\tau(b)$ for any $a \in \mathcal{N}_1$ and $b \in \mathcal{N}_2$ and two random variables $x, y$ are Boolean independent if $W^*(x)$ and $W^*(y)$ are Boolean independent.
\begin{proposition}
Let $x \in \mathcal{M}_{sa}$. Then
\begin{equation}\label{mos}
{\rm var}(x) =\inf\{\tau\left((x-y)^2\right): ~y\in \mathcal{M}_{sa} {\rm ~and~} x, y {\rm ~are~ Boolean~independent}\}.
\end{equation}
\end{proposition}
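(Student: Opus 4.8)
The plan is to prove the two inequalities separately; the left-to-right bound is immediate from a single good choice of $y$, and the reverse bound follows from a completion-of-the-square computation.

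First I would show that the right-hand side of \eqref{mos} is at most ${\rm var}(x)$. The point is that every scalar multiple of the identity is Boolean independent from $x$: since $W^*(\lambda 1)=\mathbb{C}1$ for $\lambda\in\mathbb{R}$, one has $\tau\!\left(a(\lambda 1)\right)=\lambda\tau(a)=\tau(a)\tau(\lambda 1)$ for every $a\in W^*(x)$, because $\tau(\lambda 1)=\lambda$. Taking $y=\tau(x)1\in\mathcal{M}_{sa}$ then gives $\tau\!\left((x-y)^2\right)=\tau\!\left((x-\tau(x)1)^2\right)={\rm var}(x)$, so the infimum in \eqref{mos} is $\le {\rm var}(x)$ and, moreover, is attained.

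For the reverse inequality, let $y\in\mathcal{M}_{sa}$ be Boolean independent from $x$. Since $x$ and $y$ are self-adjoint, each lies in the von Neumann algebra it generates (a self-adjoint element is a norm limit of linear combinations of its spectral projections), so Boolean independence of $W^*(x)$ and $W^*(y)$ yields $\tau(xy)=\tau(x)\tau(y)$; note that $\tau(xy)$ is real, because $\tau(xy)=\tau(yx)=\overline{\tau(xy)}$ by traciality and self-adjointness, and $\tau(x),\tau(y)\in\mathbb{R}$. Expanding and setting $t:=\tau(y)$, and using $\tau(y^2)\ge\tau(y)^2=t^2$ (which is just ${\rm var}(y)\ge 0$), I would compute
\[
\tau\!\left((x-y)^2\right)=\tau(x^2)-2\tau(x)\tau(y)+\tau(y^2)\ge \tau(x^2)-2\tau(x)t+t^2={\rm var}(x)+\bigl(\tau(x)-t\bigr)^2\ge {\rm var}(x).
\]
Combining the two bounds gives the identity \eqref{mos}. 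As a byproduct, equality forces $\tau(y)=\tau(x)$ and ${\rm var}(y)=0$, so $y=\tau(x)1$ by faithfulness of $\tau$, identifying the unique minimizer.

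I do not anticipate a genuine obstacle here. The only two places calling for a moment's care are (a) confirming that a self-adjoint element belongs to $W^*$ of itself, so that the identity $\tau(xy)=\tau(x)\tau(y)$ is actually applicable, and (b) checking that the cross term $\tau(xy)$ is real, which is what makes the completion of the square over $\mathbb{R}$ legitimate.
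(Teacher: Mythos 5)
Your proof is correct and takes essentially the same route as the paper: the lower bound is the identical completion-of-the-square computation using $\tau(y)^2\le\tau(y^2)$ and $\tau(xy)=\tau(x)\tau(y)$, and the upper bound comes from taking $y=\tau(x)1$. You are in fact a bit more explicit than the paper, which leaves the admissibility of $y=\tau(x)1$ and the attainment of the infimum implicit in the phrase ``it follows from the definition of variance.''
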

\begin{proof}
Let $y\in \mathcal{M}_{sa}$ and $\tau(xy)=\tau(x)\tau(y)$. First note that
\begin{equation}\label{eq55}
\tau (y)^2 = \tau(y.1)^2 \leq \tau\left(y^2\right) \tau (1) = \tau \left(y^2\right).
\end{equation}
Next we have
\begin{eqnarray*}
{\rm var}(x) &=& \tau \left(x^2\right) - \tau (x)^2\\
&\leq & \tau \left(x^2\right) - \tau (x)^2 + \left(\tau (x) - \tau (y)\right)^2\\
&=& \tau\left(x^2\right) - 2\tau(x) \tau(y) + \tau(y)^2\\
&\leq & \tau\left(x^2\right) - 2\tau (xy) + \tau (y^2) \\
&=& \tau\left( (x-y)^2\right).
\end{eqnarray*}
It follows from the definition of variance that \eqref{mos} is valid.
\end{proof}
The expression $\tau\left((x-y)^2\right)$, in particular when $y$ is replaced by $\mathcal{E}(x)$, is in the spirit of Efron--Stein inequality.\\
In this paper, we need also the tensor and free independence to get our results; cf \cite{D}. Von Neumann subalgebras $\mathcal{N}_j\,\,(1\leq j\leq n)$ are called tensor independent if
\begin{eqnarray*}
\tau\left(\prod_{i=1}^m\left(\prod_{k=1}^n a_{ki}\right) \right) = \prod_{k=1}^n\tau\left(\prod_{i=1}^m a_{ki}\right),
\end{eqnarray*}
whenever $a_{kj} \in \mathcal{N}_k$ ~ ($1 \leq j \leq m$; $1 \leq k \leq n$; $m \in \mathbb{N}$);\\
and the $\mathcal{N}_j\,\,(1\leq j\leq n)$ are free independent if
\begin{align*}
\tau\left(a_1a_2\ldots a_m \right)=0,
\end{align*}
when $a_k \in \mathcal{N}_{i_k}$, $i_1 \neq i_2 \neq \ldots \neq i_m$ and $\tau(a_k)=0$ for all $k \in \{1, 2, \ldots, m\}$.

To achieve our main result of this section, we model some techniques of the classical theory \cite{Bouch} to the context of noncommutative probability spaces.
We need the next lemma to get the main result. To prove the lemma, we apply the following theorem, which relates to the notion of conditional independence.
\begin{theorem}\cite[Theorem 5.1]{D}\label{th4}
Let $x \in L_1\left(\mathcal{M}\right)$ and $\mathcal{M}_1, \mathcal{M}_2$ be von Neumann subalgebras of $\mathcal{M}$ such that $W^*\left(\{x\}\cup \mathcal{M}_1\right)$ and $\mathcal{M}_2$ are either freely independent or tensor independent. Then $\mathcal{E}_{\mathcal{M}_1, \mathcal{M}_2} (x) = \mathcal{E}_{\mathcal{M}_1}(x)$, where $\mathcal{E}_{\mathcal{M}_1, \mathcal{M}_2}$ is the conditional expectation with respect to $W^*\left(\mathcal{M}_1 \bigcup \mathcal{M}_2 \right)$.
\end{theorem}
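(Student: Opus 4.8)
The plan is to pin down $\mathcal E_{\mathcal M_1,\mathcal M_2}=\mathcal E_{W^*(\mathcal M_1\cup\mathcal M_2)}$ through the trace pairing and then to verify the stated identity on a weak-$*$ total family of ``words''. For a von Neumann subalgebra $\mathcal Q\subseteq\mathcal M$, the conditional expectation $\mathcal E_{\mathcal Q}(a)$ is the unique element of $L_1(\mathcal Q)$ satisfying $\tau\bigl(\mathcal E_{\mathcal Q}(a)\,q\bigr)=\tau(aq)$ for all $q\in\mathcal Q$; this is read off from properties {\rm (ii)}--{\rm (iii)} together with the faithfulness of the trace on $L_1(\mathcal Q)$. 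Writing $\mathcal N:=W^*(\mathcal M_1\cup\mathcal M_2)$, $\mathcal P:=W^*(\{x\}\cup\mathcal M_1)$ and $e:=\mathcal E_{\mathcal M_1}(x)\in L_1(\mathcal M_1)\subseteq L_1(\mathcal N)$, it therefore suffices to show $\tau\bigl((x-e)\,n\bigr)=0$ for all $n\in\mathcal N$. Since the $*$-algebra generated by $\mathcal M_1\cup\mathcal M_2$ is weak-$*$ dense in $\mathcal N$ and $z\mapsto\tau\bigl((x-e)z\bigr)$ is a normal functional, Kaplansky's density theorem lets us restrict to $n=u_0v_1u_1v_2\cdots v_ku_k$ with $u_i\in\mathcal M_1$, $v_i\in\mathcal M_2$ and $k\ge0$ (every monomial in $\mathcal M_1\cup\mathcal M_2$ collapses to this shape, with $u_0$ or $u_k$ possibly equal to $1$).

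Before the main computation I would reduce to bounded $x$: using the polar decomposition $x=u|x|$, the truncations $x_m:=u\,|x|\,\chi_{[0,m]}(|x|)$ lie in $W^*(\{x\})\subseteq\mathcal P$, hence $W^*(\{x_m\}\cup\mathcal M_1)\subseteq\mathcal P$ is still freely (resp.\ tensor) independent from $\mathcal M_2$; as $x_m\to x$ in $L_1(\mathcal M)$ and $\mathcal E_{\mathcal N},\mathcal E_{\mathcal M_1}$ are $L_1$-contractive, the general case follows from the bounded one by passing to the limit. So assume $x\in\mathcal M$ and put $y:=x-e\in\mathcal P$. Then $\mathcal E_{\mathcal M_1}(y)=0$, so by property {\rm (ii)} one has $\tau(byb')=\tau\bigl(\mathcal E_{\mathcal M_1}(byb')\bigr)=\tau\bigl(b\,\mathcal E_{\mathcal M_1}(y)\,b'\bigr)=0$ for all $b,b'\in\mathcal M_1$; in particular $\tau(y)=0$, which settles the case $k=0$. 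For $k\ge1$ set $p_0:=yu_0\in\mathcal P$, so that $\mathcal E_{\mathcal M_1}(p_0)=\mathcal E_{\mathcal M_1}(y)u_0=0$, and it remains to prove $\tau\bigl(p_0v_1u_1v_2\cdots v_ku_k\bigr)=0$.

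For tensor independent $\mathcal P$ and $\mathcal M_2$ this is one line: viewing the word as alternating between the $\mathcal P$-blocks $p_0,u_1,\dots,u_k$ and the $\mathcal M_2$-blocks $v_1,\dots,v_k$ (padded by a unit to balance them) and applying the factorization defining tensor independence yields
\[
\tau\bigl(p_0v_1u_1\cdots v_ku_k\bigr)=\tau\bigl(p_0u_1u_2\cdots u_k\bigr)\,\tau\bigl(v_1v_2\cdots v_k\bigr),
\]
and $p_0u_1\cdots u_k\in\mathcal P$ has $\mathcal E_{\mathcal M_1}(p_0u_1\cdots u_k)=\mathcal E_{\mathcal M_1}(p_0)u_1\cdots u_k=0$, hence trace zero by {\rm (iii)}. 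For freely independent $\mathcal P$ and $\mathcal M_2$ I would induct on the number $k$ of $\mathcal M_2$-letters. Expanding each $v_i$ and each $u_i$ as its centred part plus its trace, say $v_i=\mathring v_i+\tau(v_i)1$, and multiplying out, each resulting term falls into one of two families. If no letter among $v_1,\dots,v_k,u_1,\dots,u_{k-1}$ has been replaced by a scalar, the term is a scalar times an alternating product of centred elements of $\mathcal P$ and $\mathcal M_2$ (recall $\mathcal M_1\subseteq\mathcal P$ and $\tau(p_0)=0$), so it vanishes by freeness. Otherwise, pulling out the scalars and fusing adjacent letters of the same algebra, the term becomes $\tau\bigl(p_0^{\prime}\cdot(\text{shorter word})\bigr)$, where $p_0^{\prime}=p_0\cdot(\text{a product of }u_i\text{'s})\in\mathcal P$ still satisfies $\mathcal E_{\mathcal M_1}(p_0^{\prime})=0$ and the number of $\mathcal M_2$-letters has strictly decreased (killing a $v_i$ lowers it directly, killing an inner $u_i$ fuses $v_i$ with $v_{i+1}$), so it vanishes by the inductive hypothesis. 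Hence $\tau\bigl((x-e)n\bigr)=0$ on all words, and $\mathcal E_{\mathcal N}(x)=e=\mathcal E_{\mathcal M_1}(x)$.

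The main obstacle is the combinatorial bookkeeping in the free case: after expanding into centred components and collapsing, one must be sure that the residual word retains the two features the induction feeds on --- a leading $\mathcal P$-block on which $\mathcal E_{\mathcal M_1}$ vanishes, and a strictly smaller $\mathcal M_2$-length --- and that the terms for which the $\mathcal M_2$-length does not drop (those touching only the trailing $u_k$) are exactly the ones disposed of directly by freeness. A minor but real point is the passage from $x\in L_1(\mathcal M)$ to $x\in\mathcal M$: the approximants must be drawn from $\mathcal P$ (equivalently from $W^*(\{x\})$) so that independence from $\mathcal M_2$ is inherited, rather than chosen as arbitrary bounded elements converging to $x$.
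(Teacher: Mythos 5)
The paper offers no proof of this statement at all: it is imported as a black box from \cite[Theorem 5.1]{D}, so there is no internal argument to compare yours against. Your proof is the standard direct verification and, as far as I can check, it is correct. You characterise $\mathcal{E}_{W^*(\mathcal{M}_1\cup\mathcal{M}_2)}(x)$ through the trace pairing, use normality of $z\mapsto\tau\bigl((x-e)z\bigr)$ to reduce to alternating words $u_0v_1u_1\cdots v_ku_k$, and kill each word: in the tensor case by the factorisation $\tau(p_0v_1u_1\cdots v_ku_k)=\tau(p_0u_1\cdots u_k)\,\tau(v_1\cdots v_k)$, whose first factor vanishes because $\mathcal{E}_{\mathcal{M}_1}(p_0)=0$; in the free case by centring and inducting on the number of $\mathcal{M}_2$-letters. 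The induction is sound: each term of the expansion either is a scalar times an alternating product of centred elements of $\mathcal{P}=W^*(\{x\}\cup\mathcal{M}_1)$ and $\mathcal{M}_2$ (vanishing by freeness; note that $\tau(p_0)=0$ is needed here and does hold, since $\tau(p_0)=\tau(\mathcal{E}_{\mathcal{M}_1}(p_0))$), or, after fusing, has strictly fewer $\mathcal{M}_2$-letters and a leading block of the form $p_0\cdot(\text{a product of elements of }\mathcal{M}_1)$, still annihilated by $\mathcal{E}_{\mathcal{M}_1}$ via the module property, so the inductive hypothesis applies. Two points deserve an explicit line in a written-up version: (a) the uniqueness half of the trace-pairing characterisation of $\mathcal{E}_{\mathcal{Q}}$ on $L_1$, i.e.\ that the pairing between $L_1(\mathcal{Q})$ and $\mathcal{Q}$ is separating; and (b) for unbounded $x\in L_1(\mathcal{M})$ the very meaning of $W^*(\{x\}\cup\mathcal{M}_1)$ --- the paper only defines $W^*(x)$ for normal $x$ via its spectral measure --- so your truncation step should record that the partial isometry $u$ and the spectral projections of $|x|$ lie in (the algebra affiliated with) $\mathcal{P}$, whence $x_m\in\mathcal{P}$ and independence from $\mathcal{M}_2$ is inherited. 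What your route buys over the citation is self-containment, and it makes transparent why the hypothesis must involve $W^*(\{x\}\cup\mathcal{M}_1)$ rather than just $W^*(x)$; it also subsumes the very computation the authors later carry out by hand inside the proof of Lemma \ref{le1} when they verify that $W^*\bigl(\bigcup_{k\neq j}\mathcal{N}_k\bigr)$ and $\mathcal{N}_j$ are independent before invoking this theorem.
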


\begin{lemma}\label{le1}
Let $x_j \in L_1(\mathcal{M})\,\, (1\leq j\leq n)$ be either freely independent or tensor independent self-adjoint random variables and $f$ be a function from $L_1(\mathcal{M})_{sa} \times \ldots \times L_1(\mathcal{M})_{sa}$ into $L_1(\mathcal{M})_{sa}$. Then
\begin{equation*}
{\rm var}\left(f(x_1, x_2, \ldots, x_n)\right) \leq \sum_{i=1}^n \tau\left((f(x_1, x_2, \ldots, x_n) - \mathcal{E}_i(f(x_1, x_2, \ldots, x_n)))^2\right).
\end{equation*}
\end{lemma}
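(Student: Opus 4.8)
The plan is to mimic the classical Efron--Stein argument via the Efron--Stein/Steele martingale-type decomposition, but carried out with the conditional expectations $\mathcal{E}_j'$ onto $W^*\!\left(\bigcup_{k=1}^{j}\mathcal{N}_k\right)$, where $\mathcal{N}_k := W^*(x_k)$. Write $Z := f(x_1,\dots,x_n)$ and set $\mathcal{E}_0' := \tau(\cdot)\,1$, so that $\mathcal{E}_n'(Z) = Z$ and $\mathcal{E}_0'(Z)=\tau(Z)1$. Introduce the martingale differences $\Delta_j := \mathcal{E}_j'(Z) - \mathcal{E}_{j-1}'(Z)$, so that $Z - \tau(Z)1 = \sum_{j=1}^n \Delta_j$. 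First I would establish orthogonality: for $i<j$, the tower property $\mathcal{E}_{i}'\mathcal{E}_{j}' = \mathcal{E}_{i}'$ (valid since $W^*\!\left(\bigcup_{k=1}^{i}\mathcal{N}_k\right)\subseteq W^*\!\left(\bigcup_{k=1}^{j}\mathcal{N}_k\right)$) gives $\mathcal{E}_{i}'(\Delta_j)=0$, and then $\tau(\Delta_i \Delta_j) = \tau\!\left(\mathcal{E}_i'(\Delta_i \Delta_j)\right) = \tau\!\left(\Delta_i\,\mathcal{E}_i'(\Delta_j)\right)=0$, using the module property (ii) of the conditional expectation and self-adjointness of $\Delta_i \in W^*\!\left(\bigcup_{k=1}^{i}\mathcal{N}_k\right)$. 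Hence $\mathrm{var}(Z) = \tau\!\left(\big(\sum_j \Delta_j\big)^2\right) = \sum_{j=1}^n \tau(\Delta_j^2)$.

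The second step is to bound each $\tau(\Delta_j^2)$ by $\tau\!\left((Z - \mathcal{E}_j(Z))^2\right)$, where $\mathcal{E}_j$ is the conditional expectation onto $W^*\!\left(\bigcup_{k\neq j}\mathcal{N}_k\right)$. The key identity I would aim for is $\mathcal{E}_{j-1}'\mathcal{E}_j = \mathcal{E}_{j-1}'$ — that is, conditioning on "everything but coordinate $j$" and then down to "coordinates $1,\dots,j-1$" is the same as conditioning directly down to $1,\dots,j-1$. This is exactly where Theorem~\ref{th4} enters: with $\mathcal{M}_1 = W^*\!\left(\bigcup_{k=1}^{j-1}\mathcal{N}_k\right)$ and $\mathcal{M}_2 = \mathcal{N}_j$, one has $W^*(\{Z\}\cup\mathcal{M}_1)$ depending only on $x_1,\dots,x_{j-1}$ together with... — and here is the subtlety — $Z$ depends on $x_j$ as well, so I must first apply $\mathcal{E}_j$ to kill the $x_j$-dependence: set $Z_j := \mathcal{E}_j(Z) \in W^*\!\left(\bigcup_{k\neq j}\mathcal{N}_k\right)$, note $W^*(\{Z_j\}\cup\mathcal{M}_1)$ is independent (freely or tensor, inherited from the independence of the $x_k$) of $\mathcal{N}_j = \mathcal{M}_2$, and apply Theorem~\ref{th4} to get $\mathcal{E}_{j-1}'(Z_j) = \mathcal{E}_{\mathcal{M}_1,\mathcal{M}_2}(Z_j) = \mathcal{E}_{j-1}'(Z_j)$ collapsing correctly; combined with the tower property this yields $\mathcal{E}_{j-1}'(Z) = \mathcal{E}_{j-1}'\mathcal{E}_j(Z) = \mathcal{E}_{j-1}'(Z_j)$, hence $\Delta_j = \mathcal{E}_j'(Z) - \mathcal{E}_{j-1}'(Z) = \mathcal{E}_j'(Z) - \mathcal{E}_{j-1}'(\mathcal{E}_j(Z)) = \mathcal{E}_j'(Z - \mathcal{E}_j(Z))$, where in the last step I use that $\mathcal{E}_{j-1}' = \mathcal{E}_{j-1}'\mathcal{E}_j'$ applied to $\mathcal{E}_j(Z)$, which requires $\mathcal{E}_j(Z) \in W^*\!\left(\bigcup_{k=1}^{j}\mathcal{N}_k\right)$ — true because $\mathcal{E}_j(Z)$ lives in $W^*\!\left(\bigcup_{k\neq j}\mathcal{N}_k\right)$, whose intersection structure with the range of $\mathcal{E}_j'$ needs care.

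The final step is routine: since $\mathcal{E}_j'$ is a positive contractive projection and $\tau\circ\mathcal{E}_j' = \tau$, the operator Kadison--Schwarz-type inequality $\mathcal{E}_j'(w)^2 \leq \mathcal{E}_j'(w^2)$ for self-adjoint $w = Z - \mathcal{E}_j(Z)$ gives
\[
\tau(\Delta_j^2) = \tau\!\left(\mathcal{E}_j'(w)^2\right) \leq \tau\!\left(\mathcal{E}_j'(w^2)\right) = \tau(w^2) = \tau\!\left((Z - \mathcal{E}_j(Z))^2\right),
\]
and summing over $j$ completes the proof. The main obstacle I anticipate is the bookkeeping in the second step: verifying that Theorem~\ref{th4} applies with the right algebras — in particular that $W^*(\{\mathcal{E}_j(Z)\}\cup W^*(\bigcup_{k=1}^{j-1}\mathcal{N}_k))$ is genuinely independent of $\mathcal{N}_j$ (this uses that the $x_k$ are jointly independent, not merely pairwise, so that conditioning on a subset preserves independence from the omitted coordinate), and carefully tracking which conditional expectation collapses to which. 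Everything else — orthogonality of martingale differences, the Kadison--Schwarz step, trace-preservation — is standard once the commuting-square relations $\mathcal{E}_{j-1}'\mathcal{E}_j = \mathcal{E}_{j-1}' = \mathcal{E}_j \mathcal{E}_{j-1}'$ are in hand.
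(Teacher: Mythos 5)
Your proposal is correct and follows essentially the same route as the paper's proof: the martingale-difference decomposition along the filtration $W^*\left(\bigcup_{k=1}^{j}\mathcal{N}_k\right)$, orthogonality of the differences, the identity $\Delta_j=\mathcal{E}_j'\left(y-\mathcal{E}_j(y)\right)$ obtained from Theorem \ref{th4} with $\mathcal{M}_1=W^*\left(\bigcup_{k=1}^{j-1}\mathcal{N}_k\right)$ and $\mathcal{M}_2=\mathcal{N}_j$ (the paper packages this as $\mathcal{E}_j'\circ\mathcal{E}_j=\mathcal{E}_{j-1}'$), and then the $L_2$-contractivity of $\mathcal{E}_j'$, which is your Kadison--Schwarz-plus-trace-preservation step. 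The one point you flag but do not carry out --- verifying that $W^*\left(\bigcup_{k\neq j}\mathcal{N}_k\right)$ is genuinely (freely or tensor) independent of $\mathcal{N}_j$ --- is precisely where the paper spends most of its effort.
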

\begin{proof}
Note that if $\mathcal{N}$ is an arbitrary von Neumann subalgebra of $\mathcal{M}$, $y \in L_1(\mathcal{N})$ and $\mathcal{E}_{\mathcal{N}}$ is the conditional expectation with respect to $\mathcal{N}$, then from the property (iii) of the definition of conditional expectation we obtain
\begin{eqnarray}\label{eqn1}
\tau(xy) = \tau \left(\mathcal{E}_{\mathcal{N}}(xy)\right)
= \tau (y \mathcal{E}_{\mathcal{N}}(x)),
\end{eqnarray}
for all $x$ in $L_1(\mathcal{M})$.\\
Consider the von Neumann subalgebras $\mathcal{N}_{j} = W^*(x_j)$ for any $1 \leq j \leq n$. Put $y:= f(x_1, x_2, \ldots, x_n)$ and $z:= y - \tau(y)$. Set $z_j =\mathcal{E}_j^\prime (y ) - \mathcal{E}_{j-1}^\prime (y)$, which is clearly a self-adjoint element. We have $z= \sum_{j=1}^n z_j$, since
\begin{eqnarray*}
\sum_{j=1}^n z_j = \sum_{j=1}^n \left(\mathcal{E}_j^\prime (y ) - \mathcal{E}_{j-1}^\prime (y)\right)
= \mathcal{E}_n^\prime (y ) - \mathcal{E}_{0}^\prime (y)
= y - \tau(y) 1
= z,
\end{eqnarray*}
where $\mathcal{E}_0 ^\prime$ is the conditional expectation with respect to $\mathbb{C}$.\\
We have
\begin{eqnarray}\label{eq1}
{\rm var}(y) \nonumber &=& \tau\left(z^2\right)\\ \nonumber
&=& \tau \left(\left(\sum_{j=1}^n z_j \right)^2\right)\\ \nonumber
&=& \tau \left(\sum_{j=1}^n z_j^2\right) + \tau \left(\sum_{i < j} z_i z_j\right) + \tau \left(\sum_{j < i} z_i z_j\right)\\
&=& \sum_{j=1}^n \tau\left(z_j ^2\right).
\end{eqnarray}
Notice that $\tau(z_iz_j)=0$, for all $i \neq j$, since if, e.g. $i > j$
\begin{eqnarray*}
\tau (z_i z_j) &=& \tau \left(\mathcal{E}_j^\prime(z_i z_j)\right)\\
&=& \tau \left(\mathcal{E}_j^\prime(z_i)z_j \right) \qquad (\text{by \ref{eqn1}})\\
&=& \tau \left( \mathcal{E}_j^\prime(\mathcal{E}_i^\prime (y ) - \mathcal{E}_{i-1}^\prime (y)) z_j \right)\\
&=& \tau \left( (\mathcal{E}_j^\prime (y) - \mathcal{E}_j^\prime (y))z_j \right)\\
&=& 0.
\end{eqnarray*}
We claim that
\begin{equation}\label{eq2}
\tau\left(z_j ^2\right) \leq \tau\left((y - \mathcal{E}_j(y))^2\right),
\end{equation}
for every $1 \leq j \leq n$.\\
It follows from independence property that
\begin{equation}\label{eqn2}
\mathcal{E}_j^\prime \circ \mathcal{E}_j = \mathcal{E}_{j-1}^\prime= \mathcal{E}_j^\prime \circ \mathcal{E}_{j-1}^\prime;~ j=1, \ldots, n.
\end{equation}
To achieve it, due to $\mathcal{E}_{j}^\prime \circ \mathcal{E}_{j-1}^\prime = \mathcal{E}_{j-1}^\prime$, it is enough to show that
\begin{align}\label{eqn4}
\mathcal{E}_j^\prime (x) = \mathcal{E}_{j-1}^\prime(x),
\end{align}
for any $x \in L_1\left(W^*\left(\bigcup_{k \neq j} \mathcal{N}_k\right)\right)$.\\
Fix $x \in W^*\left(\bigcup_{k \neq j} \mathcal{N}_k\right)$. In the case when $x_j \,\, (1\leq j\leq n)$ are freely independent, it follows from \cite[Proposition 2.5.5]{VDN} that $W^*\left(\bigcup_{k \neq j} \mathcal{N}_k\right)$ and $\mathcal{N}_j$ are freely independent. Therefore equation \eqref{eqn4} can be deduced from Theorem \ref{th4}, by considering $\mathcal{M}_1 = \bigcup_{k=1}^{j-1} \mathcal{N}_k$ and $\mathcal{M}_2 = \mathcal{N}_j$ if $j > 1$. If $j=1$, then, obviously $\mathcal{E}_1^\prime (x) = \tau(x)$ for all $x \in L_1\left(W^*\left(\bigcup_{k \neq 1} \mathcal{N}_k\right)\right)$. In fact, by the $\|.\|_1$-continuity of $\mathcal{E}_1^\prime$ and that of $\tau$ as well as a density argument, we may suppose that $x \in W^*\left(\bigcup_{k \neq 1} \mathcal{N}_k\right)$. Since both of sides are elements of $\mathcal{N}_1$, it suffices to prove that $\tau\left(\mathcal{E}_1^\prime (x)y\right) = \tau\left(\tau(x)y\right)$ for any $y \in \mathcal{N}_1$. This indeed holds by virtue of $\tau\left(\mathcal{E}_1^\prime (x)y\right) = \tau\left(\mathcal{E}_1^\prime (xy)\right) = \tau\left(xy\right) = \tau(x)\tau(y)$. We remark that the last equality follows from expanding of the equation $\tau\left( \left(x- \tau(x)\right)\left(y - \tau(y)\right)\right) = 0$, which can be deduced from the free independence. \\
Now, assume that $x_j\,\, (1\leq j\leq n)$ are tensor independent. Then $W^*\left(\bigcup_{k \neq j} \mathcal{N}_k\right)$ and $\mathcal{N}_j$ are also tensor independent; i.e.
\begin{eqnarray*}
\tau\left(a_{11}a_{j1}a_{12}a_{j2}\ldots a_{1m}a_{jm}\right) =\tau\left(\prod_{i=1}^m a_{1i}\right)\tau\left(\prod_{i=1}^m a_{ji}\right),
\end{eqnarray*}
whenever $a_{1i} \in W^*\left(\bigcup_{k \neq j} \mathcal{N}_k\right), a_{ji} \in \mathcal{N}_j$ ~ ($1 \leq i \leq m$; $m \in \mathbb{N}$). Indeed, from the ultra-weak continuity of $a \mapsto \tau(ab)$ and the ultra-density of the set of all polynomials $g$ in elements of $\bigcup_{k \neq j} \mathcal{N}_k$ in $W^*\left(\bigcup_{k \neq j} \mathcal{N}_k\right)$ (cf. \cite{Tak}), it is enough that the equation
\begin{eqnarray*}
\tau\left(g_{1}a_{j1}g_{2}a_{j2}\ldots g_{m}a_{jm}\right) =\tau\left(\prod_{i=1}^m g_{i}\right)\tau\left(\prod_{i=1}^m a_{ji}\right),
\end{eqnarray*}
is valid for such polynomials. Moreover, by linearity, we may assume that $g_i = u_{1}^{(i)}u_{2}^{(i)}\ldots u_{j-1}^{(i)}u_{j+1}^{(i)}\ldots u_{n}^{(i)}$ with $u_{k}^{(i)} \in \mathcal{N}_k; ~ k \in \{1, 2, \ldots, j-1, j+1, \ldots, n\}$, and this can easily be shown as follows (note that the $u_{k}^{(i)}$ are allowed to be equal to $1$ for $1 \in \mathcal{N}_k$).\\
\begin{align*}
& \tau\left(g_{1}a_{j1}g_{2}a_{j2}\ldots g_{m}a_{jm}\right)\\
&=\tau\Big(\left(u_{1}^{(1)}u_{2}^{(1)}\ldots u_{j-1}^{(1)} 1 u_{j+1}^{(1)}\ldots u_{n}^{(1)}\right)\left(1 \ldots 1 a_{j1} 1\ldots 1\right) \\
& \quad \ldots \left(u_{1}^{(m)}u_{2}^{(m)}\ldots u_{j-1}^{(m)} 1 u_{j+1}^{(m)}\ldots u_{n}^{(m)}\right)\left(1 \ldots 1 a_{jm} 1\ldots 1\right)\Big)\\
&=  \tau\left(u_{1}^{(1)}u_{1}^{(2)}\ldots u_{1}^{(m)}\right) \ldots \tau\left(u_{j-1}^{(1)}u_{j-1}^{(2)}\ldots u_{j-1}^{(m)}\right)\\
& \quad \cdot \tau\left(a_{j1}a_{j2}\ldots a_{jm}\right) \tau\left(u_{j+1}^{(1)}u_{j+1}^{(2)}\ldots u_{j+1}^{(m)}\right) \ldots  \tau\left(u_{n}^{(1)}u_{n}^{(2)}\ldots u_{n}^{(m)}\right)\\
&= \tau\left(a_{j1}a_{j2}\ldots a_{jm}\right) \tau\Big(\left(u_{1}^{(1)}u_{2}^{(1)}\ldots u_{j-1}^{(1)} 1 u_{j+1}^{(1)}\ldots u_{n}^{(1)}\right) \\
& \quad \ldots \left(u_{1}^{(m)}u_{2}^{(m)}\ldots u_{j-1}^{(m)} 1 u_{j+1}^{(m)}\ldots u_{n}^{(m)}\right)\Big)\qquad\qquad ({\rm ~by~ tensor~ independence}) \\
&= \tau\left(a_{j1}a_{j2}\ldots a_{jm}\right) \tau\left(g_{1}g_{2}\ldots g_{m}\right).
\end{align*}
Thus equation \eqref{eqn4} is true for every $x \in W^*\left(\bigcup_{k \neq j} \mathcal{N}_k\right)$ and hence, by a density argument and the $\|.\|_1$-contractivity of the conditional expectations, for each $x \in L_1\left(W^*\left(\bigcup_{k \neq j} \mathcal{N}_k\right)\right)$. Again, by putting $\mathcal{M}_1 = \bigcup_{k=1}^{j-1} \mathcal{N}_k$ and $\mathcal{M}_2 = \mathcal{N}_j$, if $j > 1$, equation \eqref{eqn4} can be obtained from Theorem \ref{th4}. In the case when $j=1$, the investigation of $\mathcal{E}_1^\prime (x) = \tau(x)$ is similar to the freeness case. The second equation in \eqref{eqn2} is evident.\\
Therefore
\begin{eqnarray}\label{eqn3}
z_j^2 &=& \left(\mathcal{E}_j^\prime (y ) - \mathcal{E}_{j-1}^\prime (y)\right)^2
= \left(\mathcal{E}_j^\prime (y - \mathcal{E}_j(y)\right)^2.
\end{eqnarray}
Thus
\begin{eqnarray*}\label{eq22}
\tau \left(z_j^2\right) &=& \left\| z_j^2 \right\|_1 \\
&=& \left\| \left( \mathcal{E}_j^\prime \left(y - \mathcal{E}_j(y)\right)\right)^2 \right\|_1 \qquad\qquad\qquad \qquad\qquad\qquad\qquad\qquad\quad (\text{by \eqref{eqn3}})\\
&=& \left\| \mathcal{E}_j^\prime \left(y - \mathcal{E}_j(y)\right) \right\|_2^2 \\
&\leq & \left\| y - \mathcal{E}_j(y) \right\|_2^2 \quad(\text{by extending of $\mathcal{E}_j^\prime$, which is still a contraction.})\\
&=& \tau \left((y - \mathcal{E}_j(y))^2 \right),
\end{eqnarray*}
which gives us inequality \eqref{eq2}.\\
The desired inequality can be now deduced from (\ref{eq1}) and (\ref{eq2}).
\end{proof}

It is known that if $X$ and $Y$ are independent identically distributed, then $\mathbb{E}| XY | = \mathbb{E}| X | \mathbb{E} | Y |$ and $\mathbb{E} \left( |X|^p \right) = \mathbb{E} \left(|Y|^p \right)$ for any $p \geq 1$; cf. \cite{G}. Similarly, if $\mathcal{G}$ is a sub-$\sigma$-algebra of $\mathcal{F}$ and $X$ and $Y$ are independent identically distributed conditionally to $\mathcal{G}$, then $\mathbb{E}_{\mathcal{G}}| XY | = \mathbb{E}_{\mathcal{G}}| X | \mathbb{E}_{\mathcal{G}} | Y |$ and $\mathbb{E}_{\mathcal{G}} \left( |X|^p \right) = \mathbb{E}_{\mathcal{G}} \left(|Y|^p \right)$ for any $p \geq 1$, where $\mathbb{E}_{\mathcal{G}}$ is the conditional expectation relative to $\mathcal{G}$.\\
 In the noncommutative setup, we use the notation $\mathbb{P}\left( x \geq t \right) := \tau \left(\chi_{[t, \infty)}(x) \right)$. In this case, if $x, y \in \mathcal{M}_+$ are Boolean independent identically distributed random variables, then $\tau (xy) = \tau(x)\tau(y)$ and as mentioned in \cite{sm1, TMS} we deduce that
 \begin{eqnarray*}
 \| x \|_p^p = \int_0^\infty p t^{p-1} \tau \left(\chi_{[t, \infty)}(x)\right) dt = \int_0^\infty p t^{p-1} \tau \left(\chi_{[t, \infty)}(y)\right) dt = \| y \|_p^p.
 \end{eqnarray*}
The conditionally independent and identically distributed random variables can be defined in the same way.\\
Now we are ready to present our generalization of the Efron--Stein inequality.
\begin{theorem}[Noncommutative Efron--Stein inequality]\label{Efr1}
Let $x_1, \cdots, x_n   $ be either freely independent or tensor independent self-adjoint random variables, and $f: L_1(\mathcal{M})_{sa} \times \ldots \times L_1(\mathcal{M})_{sa} \longrightarrow L_1(\mathcal{M})_{sa}$ be an arbitrary function. Suppose that $y= f\left(x_1, x_2, \ldots, x_n\right)$ and $y_j^\prime = f\left(x_1, \ldots, x_{j-1},x_j^\prime, x_{j+1}, \ldots, x_n\right)$ are such that $\tau(yy_j^\prime) \leq \| \mathcal{E}_j(y)\|_2^2$ and $\tau \left((y_j^\prime)^2\right) \geq \tau \left( y^2 \right)$ for all $j=1,2, \ldots, n$, where $x_1 ^\prime, x_2 ^\prime, \ldots, x_n ^\prime \in L_1(\mathcal{M})_{sa}$ are distinct elements of $x_1, x_2, \ldots, x_n$ and $\mathcal{N}_j = W^*(x_j),\,j= 1, 2, \ldots, n$. Then
\begin{eqnarray*}
{\rm var}(y) \leq \frac{1}{2} \sum_{j=1}^n \tau\left((y - y_j^\prime)^2\right).
\end{eqnarray*}
\end{theorem}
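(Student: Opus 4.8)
The plan is to reduce the statement to Lemma~\ref{le1} by two elementary $L_2$-expansions. First I would record, for each $j$, the identity
\[
\tau\big((y-\mathcal{E}_j(y))^2\big)=\tau(y^2)-2\tau\big(y\,\mathcal{E}_j(y)\big)+\tau\big(\mathcal{E}_j(y)^2\big)=\tau(y^2)-\|\mathcal{E}_j(y)\|_2^2 ,
\]
where the last equality uses that $\mathcal{E}_j$ is a $\tau$-preserving projection: since $\mathcal{E}_j(y)$ is self-adjoint and lies in $W^*\!\big(\bigcup_{k\neq j}\mathcal{N}_k\big)$, property~(ii) gives $\mathcal{E}_j\big(y\,\mathcal{E}_j(y)\big)=\mathcal{E}_j(y)^2$, and then $\tau\circ\mathcal{E}_j=\tau$ yields $\tau\big(y\,\mathcal{E}_j(y)\big)=\tau\big(\mathcal{E}_j(y)^2\big)=\|\mathcal{E}_j(y)\|_2^2$. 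For $y\notin\mathcal{M}$ one first runs this for $y\in\mathcal{M}_{sa}$ and passes to the limit using the $\|\cdot\|_1$- and $\|\cdot\|_2$-continuity of $\mathcal{E}_j$ and $\tau$, exactly as in the proof of Lemma~\ref{le1}.

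Next, for each $j$ I would expand
\[
\tau\big((y-y_j^\prime)^2\big)=\tau(y^2)-2\tau(y y_j^\prime)+\tau\big((y_j^\prime)^2\big)
\]
and feed in the two standing hypotheses. The bound $\tau(y y_j^\prime)\le\|\mathcal{E}_j(y)\|_2^2$ gives $-2\tau(y y_j^\prime)\ge-2\|\mathcal{E}_j(y)\|_2^2$, while $\tau\big((y_j^\prime)^2\big)\ge\tau(y^2)$ controls the last term, so together
\[
\tau\big((y-y_j^\prime)^2\big)\ \ge\ 2\tau(y^2)-2\|\mathcal{E}_j(y)\|_2^2\ =\ 2\,\tau\big((y-\mathcal{E}_j(y))^2\big),
\]
that is, $\tau\big((y-\mathcal{E}_j(y))^2\big)\le\tfrac12\,\tau\big((y-y_j^\prime)^2\big)$ for every $j=1,\dots,n$.

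To finish, I would invoke Lemma~\ref{le1} --- which applies verbatim, $x_1,\dots,x_n$ being freely or tensor independent self-adjoint random variables and $f$ taking values in $L_1(\mathcal{M})_{sa}$ --- to obtain ${\rm var}(y)\le\sum_{j=1}^n\tau\big((y-\mathcal{E}_j(y))^2\big)$, and then sum the inequality of the previous paragraph over $j$. I do not anticipate any genuine difficulty once Lemma~\ref{le1} is in hand: the entire content of the theorem is that the two inequalities imposed on $y$ and $y_j^\prime$ are the noncommutative surrogates for the classical facts valid when $x_j^\prime$ is an independent copy of $x_j$ (namely that the conditional second moment of $y_j^\prime$ equals that of $y$, and that the conditional expectation of $y y_j^\prime$ factors), and they are calibrated precisely so that the symmetrization halves the bound of Lemma~\ref{le1}. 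The only delicate point is the passage from $\mathcal{M}$ to $L_2(\mathcal{M})$ in the first identity, handled by the density and continuity argument indicated above.
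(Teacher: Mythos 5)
Your argument is correct and is essentially the paper's own proof: both establish $\tau\bigl((y-\mathcal{E}_j(y))^2\bigr)\le\tfrac12\,\tau\bigl((y-y_j^\prime)^2\bigr)$ by expanding the square, invoking the two hypotheses, and using $\tau\bigl((y-\mathcal{E}_j(y))^2\bigr)=\tau(y^2)-\|\mathcal{E}_j(y)\|_2^2$, then conclude by summing and applying Lemma~\ref{le1}. No substantive differences.
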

\begin{proof}
First we claim that
\begin{equation*}
\tau \left((y - \mathcal{E}_j(y))^2\right) \leq \frac{1}{2} \tau \left((y - y_j^\prime)^2\right).
\end{equation*}
In light of
\begin{eqnarray*}
\frac{1}{2} \tau \left((y - y_j^\prime)^2\right) &=& \frac{1}{2} \left[ \tau \left(y^2\right) - 2\tau (yy_j^\prime) + \tau \left((y_j^\prime)^2\right) \right]\\
&\geq & \frac{1}{2} \left[ 2\tau \left(y^2\right) - 2\tau (yy_j^\prime) \right] \quad (\text{as $\tau\left(y^2\right) \leq \tau\left( (y_j^\prime)^2 \right)$})\\
&\geq & \tau\left(y^2\right) - \tau \left( \mathcal{E}_j (y)^2 \right) \qquad (\text{as $\tau(yy_j^\prime) \leq \| \mathcal{E}_j(y)\|_2^2$})\\
&=& \tau \left((y - \mathcal{E}_j(y))^2\right),
\end{eqnarray*}
and Lemma \ref{le1}, we get the desired inequality.
\end{proof}

Note that in our noncommutative version of Efron--Stein's inequality, we replace the strong condition ``$y$ and $y_j$ are conditionally to $W^*\left( x_1 ,\ldots, x_{j-1}, x_{j+1}, \ldots, x_n \right)$ independent identically distributed random variables'' by the weaker condition ``$\tau(yy_j^\prime) \leq \| \mathcal{E}_j(y)\|_2^2$ and $\tau \left((y_j^\prime)^2\right) \geq \tau \left( y^2 \right)$''. The next result is a commutative version of Efron--Stein's inequality.
\begin{corollary}\cite[Theorem 3.1]{Bouch}
Suppose that $(\Omega, \mathcal{F}, \mathbb{P})$ is a probability space and $f: \Omega^n \longrightarrow \mathbb{R}$ is a measurable function of $n$ variables. Let $X_1, X_2, \ldots, X_n$ be arbitrary independent random variables and $X_1^\prime, X_2^\prime, \ldots, X_n^\prime$ be independent copies of $X_1, X_2, \ldots, X_n$. If $Z= f\left(X_1, X_2, \ldots, X_n\right)$ and $Z_j^\prime = f\left(X_1, \ldots, X_j^\prime, \ldots, X_n\right)$, then
\begin{equation*}
{\rm var}(Z) \leq \frac{1}{n} \sum_{j=1}^n \mathbb{E}\left[\left(Z - Z_j^\prime \right)^2\right].
\end{equation*}
\end{corollary}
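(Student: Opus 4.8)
The plan is to obtain the statement as the commutative specialization of the noncommutative Efron--Stein inequality, Theorem \ref{Efr1}. First I would realize the classical data inside the tracial framework of the paper: take $\mathcal{M} = L^\infty(\Omega, \mathcal{F}, \mathbb{P})$ with trace $\tau = \mathbb{E}$, an abelian noncommutative probability space, and regard each real-valued variable $X_j$ together with its independent copy $X_j'$ as a self-adjoint element of $L_1(\mathcal{M})_{sa}$, generating the subalgebra $\mathcal{N}_j = W^*(X_j)$. Since the $X_j$ are classically independent, the $\mathcal{N}_j$ are tensor independent, so the family $x_j = X_j$ satisfies the tensor-independent hypothesis of Theorem \ref{Efr1}, and the self-adjointness requirement is automatic because the variables are real-valued. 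In this picture the conditional expectation $\mathcal{E}_j$ onto $W^*\!\left(\bigcup_{k\neq j}\mathcal{N}_k\right)$ is precisely $\mathbb{E}[\,\cdot \mid (X_k)_{k\neq j}]$, i.e.\ integration over the $j$-th coordinate.

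The substance of the argument is to verify, for $y = f(X_1,\ldots,X_n)$ and $y_j' = f(X_1,\ldots,X_j',\ldots,X_n)$, the two moment conditions demanded by Theorem \ref{Efr1}. For the second condition I would observe that replacing $X_j$ by its independent copy $X_j'$ leaves the joint law of the inputs of $f$ unchanged, so $Z_j'$ and $Z$ are identically distributed and $\tau((y_j')^2) = \mathbb{E}[(Z_j')^2] = \mathbb{E}[Z^2] = \tau(y^2)$, which gives $\tau((y_j')^2) \ge \tau(y^2)$ with equality. For the first condition I would use that, conditionally on $(X_k)_{k\neq j}$, the variables $Z$ and $Z_j'$ are independent (they depend on the independent inputs $X_j$ and $X_j'$) and identically distributed, so that $\mathbb{E}[Z Z_j' \mid (X_k)_{k\neq j}] = \mathbb{E}[Z\mid(X_k)_{k\neq j}]\,\mathbb{E}[Z_j'\mid(X_k)_{k\neq j}] = (\mathcal{E}_j(y))^2$; taking the full trace yields $\tau(yy_j') = \|\mathcal{E}_j(y)\|_2^2$, so the inequality $\tau(yy_j') \le \|\mathcal{E}_j(y)\|_2^2$ also holds with equality.

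With both hypotheses established, I would invoke Theorem \ref{Efr1}, whose conclusion, after rewriting $\tau$ as $\mathbb{E}$ throughout, is exactly the asserted variance bound for $Z$. I expect the main obstacle to be the honest justification of the conditional-independence identity $\mathbb{E}[Z Z_j'\mid(X_k)_{k\neq j}] = (\mathcal{E}_j(y))^2$: one must legitimately pull the conditional expectation through the product, which rests on the mutual independence of the enlarged family $\{X_k\}_{k=1}^n \cup \{X_j'\}$ and on the fact that conditioning on $(X_k)_{k\neq j}$ leaves $X_j$ and $X_j'$ as independent draws from one common conditional law. A secondary point requiring care is the bookkeeping that situates the independent copies $X_j'$ within the independent family, so that $y_j'$ is a genuine element of $L_1(\mathcal{M})_{sa}$ and Theorem \ref{Efr1} applies verbatim; once this is arranged, the passage from the tracial inequality to the stated expectation inequality is immediate.
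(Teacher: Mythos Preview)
Your proposal is correct and follows essentially the same route as the paper: both verify the two moment hypotheses of Theorem~\ref{Efr1} by using that $Z$ and $Z_j'$ are, conditionally on $(X_k)_{k\neq j}$, independent and identically distributed, so that $\mathbb{E}^{(j)}[ZZ_j'] = (\mathbb{E}^{(j)}Z)^2$ and $\mathbb{E}[(Z_j')^2]=\mathbb{E}[Z^2]$, and then invoke the theorem. Your write-up is simply more explicit about the commutative specialization ($\mathcal{M}=L^\infty(\Omega)$, $\tau=\mathbb{E}$, $\mathcal{E}_j$ as classical conditioning) and about housing the copies $X_j'$ in the same probability space, but the substance is identical.
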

\begin{proof}
It is known that if $X$ and $Y$ are independent and identically distributed random variables, then ${\rm var}(X) = \frac{1}{2} \mathbb{E}\left[ (X - Y)^2 \right]$. Therefore $Z_j^\prime$ is an independent copy of $Z$ conditionally to $(X_1, \ldots,X_{j-1}, X_{j+1}, \ldots, X_n)$; cf. \cite[Theorem 3.1]{Bouch}. Hence $\mathbb{E}^{(j)} |ZZ_j^\prime| = \left( \mathbb{E}^{(j)} (Z)\right)^2 $ and $\mathbb{E} \left[ (Z_j^\prime)^2 \right] = \mathbb{E} \left[ Z^2 \right]$, $\, \, (1 \leq j \leq n)$, where $\mathbb{E}^{(j)}$ is the conditional expectation with respect to $(X_1, \ldots,X_{j-1}, X_{j+1}, \ldots, X_n)$. Hence the result can be concluded from Theorem \ref{Efr1}.
\end{proof}

\section{Noncommutative matrix Efron--Stein inequality}

Employing Theorem \ref{Efr1} we state a noncommutative version of the matrix Efron--Stein inequality in this section.

\begin{theorem}[Noncommutative matrix Efron--Stein inequality]\label{pr1}
Let $\mathcal{U}_j$ be a sequence of either freely independent or tensor independent von Neumann subalgebras of $\left(\mathbb{M}_d (\mathcal{M}), \overline{\tau} \right)$ and $U_j \in L_1(\mathcal{U}_j) \, \, (1 \leq j \leq n)$ be self-adjoint matrices and let $F: L_1\left(\mathbb{M}_d (\mathcal{M})\right)_{sa} \times \ldots \times L_1\left(\mathbb{M}_d (\mathcal{M})\right)_{sa} \longrightarrow L_1\left(\mathbb{M}_d (\mathcal{M})\right)_{sa}$ be an arbitrary function.\\
Let $V = F \left( U_1, \ldots, U_n \right)$ and  $V^{(j)} = F \left( U_1, \ldots, U_{j-1}, U_j^\prime, U_{j+1}, \ldots, U_n \right)$ be such that $\overline{\tau} \left( V V^{(j)} \right) \leq \| \mathcal{E}_j \left( V \right) \|_2^2$ and $\overline{\tau} \left( (V^{(j)})^2 \right) \geq \overline{\tau} \left( V^2 \right)$ for all $j = 1, 2, \ldots, n$, where $ U_1^\prime, U_2^\prime, \ldots, U_n^\prime \in L_1\left(\mathbb{M}_d (\mathcal{M})\right)_{sa}$ and $\mathcal{E}_j$ denotes the conditional expectation of $\mathbb{M}_d (\mathcal{M})$ with respect to $\mathcal{U}_j$. Then the inequality
\begin{equation*}
\tau \left( {\rm tr} \left( \left( V - \tau \left( V \right) \right)^2 \right) \right) \leq \frac{1}{2} \tau \left( {\rm tr} \left( \sum_{j=1}^n \left(V - V^{(j)} \right)^2 \right) \right),
\end{equation*}
holds.
\end{theorem}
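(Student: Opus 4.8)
The plan is to reduce Theorem \ref{pr1} directly to the scalar noncommutative Efron--Stein inequality of Theorem \ref{Efr1}, applied to the larger noncommutative probability space $\left(\mathbb{M}_d(\mathcal{M}), \overline{\tau}\right)$ rather than to $(\mathcal{M}, \tau)$. The key observation is that $\left(\mathbb{M}_d(\mathcal{M}), \overline{\tau}\right)$ is itself a noncommutative probability space: as recalled in the preliminaries, $\overline{\tau}$ is a normal faithful finite trace with $\overline{\tau}(I_d \otimes 1) = 1$. Moreover $\tau\left({\rm tr}(X)\right) = d\,\overline{\tau}(X)$ by definition of $\overline{\tau}$, so both sides of the claimed inequality are exactly $d$ times the corresponding expressions with $\overline{\tau}$ in place of $\tau\left({\rm tr}(\cdot)\right)$. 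Hence it suffices to prove
\begin{equation*}
{\rm var}(V) = \overline{\tau}\left(\left(V - \overline{\tau}(V)\,I_d\right)^2\right) \leq \frac{1}{2}\sum_{j=1}^n \overline{\tau}\left(\left(V - V^{(j)}\right)^2\right),
\end{equation*}
where the variance is now computed in $\left(\mathbb{M}_d(\mathcal{M}), \overline{\tau}\right)$.

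Next I would verify that all the hypotheses of Theorem \ref{Efr1} transfer verbatim. The elements $U_1, \ldots, U_n$ are self-adjoint random variables in $L_1\left(\mathbb{M}_d(\mathcal{M})\right)$ that are either freely independent or tensor independent, since by assumption the generated algebras $\mathcal{U}_j \supseteq W^*(U_j)$ have this property and independence passes to subalgebras. The map $F$ plays the role of $f$. The conditional expectations $\mathcal{E}_j$ appearing in the statement are precisely the conditional expectations of $\mathbb{M}_d(\mathcal{M})$ onto the subalgebras $W^*\left(\bigcup_{k\neq j}\mathcal{U}_k\right)$ in the notation of Section 2 (or rather onto $\mathcal{U}_j$ as the statement says; one should align these, using that the relevant inequality $\tau(z_j^2) \le \tau((y-\mathcal{E}_j(y))^2)$ from Lemma \ref{le1} only uses the conditional expectation onto the ``all-but-$j$'' algebra). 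The two quantitative hypotheses $\overline{\tau}\left(VV^{(j)}\right) \leq \|\mathcal{E}_j(V)\|_2^2$ and $\overline{\tau}\left((V^{(j)})^2\right) \geq \overline{\tau}(V^2)$ are exactly the hypotheses $\tau(yy_j^\prime) \le \|\mathcal{E}_j(y)\|_2^2$ and $\tau((y_j^\prime)^2)\ge \tau(y^2)$ of Theorem \ref{Efr1}, with the $2$-norm and trace now taken in $\left(\mathbb{M}_d(\mathcal{M}), \overline{\tau}\right)$. Applying Theorem \ref{Efr1} in this space then yields ${\rm var}(V) \le \frac{1}{2}\sum_{j=1}^n \overline{\tau}\left((V-V^{(j)})^2\right)$, and multiplying through by $d$ gives the statement.

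The step I expect to require the most care is the bookkeeping of which von Neumann subalgebra each conditional expectation is attached to, and confirming that independence of the $\mathcal{U}_j$ is genuinely what is needed (Theorem \ref{Efr1} is phrased for $\mathcal{N}_j = W^*(x_j)$, whereas here we are handed possibly larger independent algebras $\mathcal{U}_j$). Inspecting the proof of Lemma \ref{le1}, the only place independence is used is to establish the intertwining relation $\mathcal{E}_j^\prime \circ \mathcal{E}_j = \mathcal{E}_{j-1}^\prime$ via Theorem \ref{th4}, and that argument goes through unchanged if $\mathcal{N}_j$ is replaced by any von Neumann subalgebra $\mathcal{U}_j$ as long as the family $(\mathcal{U}_j)$ is freely or tensor independent; so the substitution is legitimate. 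Once this is checked, everything else is a one-line unwinding of the normalization $\tau\left({\rm tr}(X)\right) = d\,\overline{\tau}(X)$, and no genuinely new estimate is needed.
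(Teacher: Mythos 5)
Your overall strategy---apply Theorem \ref{Efr1} in the noncommutative probability space $\left(\mathbb{M}_d(\mathcal{M}), \overline{\tau}\right)$ and unwind the normalization---is exactly the paper's first step, and your remarks about transferring the hypotheses and about which subalgebra each conditional expectation is attached to are reasonable. But there is a genuine gap in the reduction itself: you assert that both sides of the claimed inequality are ``exactly $d$ times'' the corresponding $\overline{\tau}$-expressions, and in particular that the left-hand side equals $d\,\overline{\tau}\bigl(\bigl(V - \overline{\tau}(V)\,I_d\bigr)^2\bigr)$. That is false, because $\tau(V)$ in the statement is \emph{not} the scalar $\overline{\tau}(V)$: as fixed at the end of the preliminaries, $\tau(V)$ denotes the entrywise trace $\bigl(\tau(V_{ij})\bigr)_{d\times d}$, a $d\times d$ complex matrix. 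So the left-hand side of the theorem is centered at a matrix, while the quantity Theorem \ref{Efr1} controls is centered at the scalar $\overline{\tau}(V)=\tau\left(\overline{{\rm tr}}(V)\right)$. Your argument proves
\begin{equation*}
\tau \left( {\rm tr} \left( \left( V - \tau\left(\overline{{\rm tr}}(V)\right) \right)^2 \right) \right) \leq \frac{1}{2}\, \tau \left( {\rm tr} \left( \sum_{j=1}^n \left(V - V^{(j)} \right)^2 \right) \right),
\end{equation*}
which is not yet the statement.

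What is missing is the comparison
\begin{equation*}
\tau \left( {\rm tr} \left( \left( V - \tau(V) \right)^2 \right) \right) \leq \tau \left( {\rm tr} \left( \left( V - \tau\left(\overline{{\rm tr}}(V)\right) \right)^2 \right) \right),
\end{equation*}
i.e.\ that recentering at the matrix $\tau(V)$ only decreases the quadratic quantity. Expanding both sides, this reduces to
\begin{equation*}
\frac{1}{d}\left( {\rm tr}\left(\tau(V)\right) \right)^2 \leq {\rm tr}\left( \left(\tau(V)\right)^2 \right),
\end{equation*}
which is Kadison's (Cauchy--Schwarz) inequality for the positive unital map $\overline{{\rm tr}}$ applied to the Hermitian matrix $\tau(V)$. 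This step is elementary but it is a real step, and it occupies most of the paper's actual proof; your proposal skips it entirely because of the conflation of $\tau(V)$ with $\overline{\tau}(V)$. Once you add this comparison, the rest of your argument matches the paper's.
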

\begin{proof}
Utilizing Theorem \ref{Efr1} to the von Neumann algebra $\mathbb{M}_d (\mathcal{M})$, we obtain
\begin{eqnarray*}
\tau \left( \overline{{\rm tr}} \left( \left( V - \tau \left( \overline{{\rm tr}} \left( V \right) \right) \right)^2 \right) \right) \leq
\frac{1}{2} \tau \left( \overline{{\rm tr}} \left( \sum_{j=1}^n \left( V - V^{(j)} \right)^2 \right) \right).
\end{eqnarray*}
Hence
\begin{eqnarray*}
\tau \left( {\rm tr} \left( \left( V - \tau \left( \overline{{\rm tr}} \left( V \right) \right) \right)^2 \right) \right) \leq
\frac{1}{2} \tau \left( {\rm tr} \left( \sum_{j=1}^n \left( V - V^{(j)} \right)^2 \right) \right).
\end{eqnarray*}
It is therefore enough to show that
\begin{equation*}
\tau \left( {\rm tr} \left( \left( V - \tau \left( \overline{{\rm tr}} \left( V \right) \right) \right)^2 \right) \right) \geq
\tau \left( {\rm tr} \left( \left( V - \tau (V) \right) ^2 \right) \right),
\end{equation*}
or, equivalently,
\begin{align}\label{tr}
\tau \left( {\rm tr} \left( \left( \tau \left( \overline{{\rm tr}} (V) \right) \right)^2 \right) \right) - 2\tau \left( {\rm tr} \left( V \tau \left( \overline{{\rm tr}} (V) \right) \right) \right) \geq
\tau \left( {\rm tr} \left( \left( \tau (V) \right)^2 \right) \right) - 2\tau \left( {\rm tr} \left( V \tau (V) \right) \right).
\end{align}
This is in turn equivalent to
\begin{equation}\label{tr2}
\frac{1}{d} \left( {\rm tr} \left( \tau (V) \right) \right)^2 \leq {\rm tr} \left( \left( \tau (V) \right)^2 \right).
\end{equation}
Indeed, the left hand side of inequality \eqref{tr} is
\begin{eqnarray*}
&&\hspace{-2cm}\tau \left( {\rm tr} \left( \left( \tau \left( \overline{{\rm tr}} (V) \right) \right)^2 \right) \right) - 2\tau \left( {\rm tr} \left( V \tau \left( \overline{{\rm tr}} (V) \right) \right) \right)\\
&=& \tau \left( {\rm tr} \left( \frac{1}{d^2} \left( {\rm tr} \left( \tau (V) \right) \right)^2 \right) \right) - \frac{2}{d} \tau \left( {\rm tr} \left( V {\rm tr} \left( \tau (V) \right) \right) \right) \\
&=& \frac{d}{d^2} \tau \left( \left( {\rm tr} \left( \tau (V) \right) \right)^2 \right) - \frac{2}{d} \left( {\rm tr} \left( \tau (V) \right) \right) ^2\\
&=& - \frac{1}{d} \left( {\rm tr} \left( \tau (V) \right) \right) ^2,
\end{eqnarray*}
and the right hand side is
\begin{eqnarray*}
\tau \left( {\rm tr} \left( \left( \tau (V) \right)^2 \right) \right) - 2\tau \left( {\rm tr} \left( V \tau (V) \right) \right) &=& {\rm tr} \left( \left( \tau (V) \right)^2 \right) - 2 {\rm tr} \left( \left( \tau (V) \right)^2 \right)\\
&& \qquad \quad \quad (\text{by linearity of $\tau$})\\
&=& - {\rm tr} \left( \left( \tau (V) \right)^2 \right).
\end{eqnarray*}
Inequality \eqref{tr2} can be deduced from Kadison's inequality \cite{K} for positive unital map $\overline{{\rm tr}}$, as
\begin{eqnarray*}
\left( \overline{{\rm tr}} (\tau (V)) \right)^2 \leq \overline{{\rm tr}} \left( \left( \tau (V) \right) ^2 \right),
\end{eqnarray*}
which is equivalent to
\begin{equation*}
\frac{1}{d} \left( {\rm tr} (\tau (V)) \right)^2 \leq {\rm tr} \left( \left( \tau (V) \right) ^2 \right)
\end{equation*}
as desired.
\end{proof}
Applying Theorem \ref{pr1}, we give a version of the Efron--Stein inequality for random matrices. Assume that $\left( X_1, X_2, \ldots, X_n \right)$ is a random vector of mutually independent random variables on a probability space $(\Omega, \mathcal{F}, \mathbb{P})$.\\

Let $F : \mathbb{R}^n \longrightarrow \mathbb{H}_d$ be a bounded measurable function, where $\mathbb{H}_d$ denotes the space of the $d \times d$ Hermitian matrices in $\mathbb{M}_d \left( \mathbb{C} \right)$. Set the random matrix
\begin{eqnarray*}
\textbf{Z} := \textbf{Z} \left( X_1, X_2, \ldots, X_n \right) := F \left( X_1, X_2, \ldots, X_n \right) - \mathbb{E} F \left( X_1, X_2, \ldots, X_n \right).
\end{eqnarray*}
Now suppose that $X_1^\prime, X_2^\prime, \ldots, X_n^\prime$ are independent copies of $X_1, X_2, \ldots, X_n$. Consider the random vectors
\begin{eqnarray*}
\left( X_1, \ldots, X_j^\prime, \ldots, X_n \right).
\end{eqnarray*}
Construct the random matrices
\begin{eqnarray*}
\textbf{Z}^{(j)} := \textbf{Z}^{(j)} \left( X_1, \ldots, X_j^\prime, \ldots, X_n \right) := F \left( X_1, \ldots, X_j^\prime, \ldots, X_n \right) - \mathbb{E} F \left( X_1, X_2, \ldots, X_n \right).
\end{eqnarray*}
With the above notations, the matrix Efron--Stein inequality reads as follows.
\begin{corollary}[Matrix Efron--Stein inequality]
The inequality
\begin{eqnarray*}
\mathbb{E} \left\| \textbf{Z} \right\|_{2}^2 \leq \frac{1}{2} \mathbb{E} \left\| \sum_{j=1}^n \left( \textbf{Z} - \textbf{Z}^{(j)} \right)^2 \right\|_{1},
\end{eqnarray*}
holds, where $\| . \|_{p}$ denote the Schatten $p$-norm.
\end{corollary}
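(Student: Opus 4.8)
The plan is to transport the random matrix situation into the noncommutative framework and then invoke Theorem \ref{pr1}. Take $\mathcal{M} = L_\infty(\Omega, \mathcal{F}, \mathbb{P})$ with $\tau = \mathbb{E}$; then $\mathbb{M}_d(\mathcal{M})$ is exactly the von Neumann algebra of (essentially bounded) $d \times d$ random matrices, equipped with $\overline{\tau} = \tau \circ \overline{{\rm tr}}$, and for $Y \in \mathbb{M}_d(\mathcal{M})$ the matrix $\tau(Y) = (\tau(Y_{ij}))_{ij}$ is the entrywise expectation $\mathbb{E}[Y]$, a constant matrix. One chooses von Neumann subalgebras $\mathcal{U}_1, \ldots, \mathcal{U}_n$ of $\mathbb{M}_d(\mathcal{M})$ that encode the individual coordinates $X_j$ in a tensor independent way (this is possible because $X_1, \ldots, X_n$ are independent), with the elements $U_j \in L_1(\mathcal{U}_j)$ corresponding to $X_j$, and arranged so that the conditional expectation $\mathcal{E}_j$ attached to $\mathcal{U}_j$ in Theorem \ref{pr1} is the map that averages over $X_j$ alone while leaving the matrix entries untouched (equivalently, $\mathbb{E}^{(j)} \otimes I_{\mathbb{M}_d}$, where $\mathbb{E}^{(j)}$ conditions on $(X_k)_{k \neq j}$). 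Since the function $F$ in Theorem \ref{pr1} is arbitrary, it may be taken so that $V := F(U_1, \ldots, U_n)$ is the random matrix $F(X_1, \ldots, X_n)$ and $V^{(j)} := F(U_1, \ldots, U_j', \ldots, U_n) = F(X_1, \ldots, X_j', \ldots, X_n)$; then $V - \tau(V) = \textbf{Z}$ and $V - V^{(j)} = \textbf{Z} - \textbf{Z}^{(j)}$, the common centering $\mathbb{E} F(X_1, \ldots, X_n)$ cancelling in the difference.

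Next I would check the two hypotheses of Theorem \ref{pr1}, both of which in fact hold with equality thanks to $X_j'$ being an independent copy of $X_j$. Replacing $X_j$ by $X_j'$ does not alter the joint law of $(X_1, \ldots, X_n)$, so $V^{(j)}$ and $V$ have the same distribution, whence $\overline{\tau}((V^{(j)})^2) = \overline{\tau}(V^2)$. Moreover, conditionally on $(X_k)_{k \neq j}$ the random matrices $F(X_1, \ldots, X_n)$ and $F(X_1, \ldots, X_j', \ldots, X_n)$ are independent and identically distributed, so, arguing entrywise with the classical fact recalled just before Theorem \ref{Efr1} together with the module property (ii) of conditional expectations, one gets $\mathcal{E}_j(V V^{(j)}) = \mathcal{E}_j(V)\mathcal{E}_j(V^{(j)}) = \mathcal{E}_j(V)^2$; applying the trace-preserving property of $\mathcal{E}_j$ then gives $\overline{\tau}(V V^{(j)}) = \overline{\tau}(\mathcal{E}_j(V)^2) = \|\mathcal{E}_j(V)\|_2^2$. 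Thus $\overline{\tau}(V V^{(j)}) \leq \|\mathcal{E}_j(V)\|_2^2$ and $\overline{\tau}((V^{(j)})^2) \geq \overline{\tau}(V^2)$ hold.

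Theorem \ref{pr1} then yields $\tau({\rm tr}((V - \tau(V))^2)) \leq \frac{1}{2}\tau({\rm tr}(\sum_{j=1}^n (V - V^{(j)})^2))$, and it remains to rephrase both sides in Schatten norms. Since $\tau = \mathbb{E}$ and $\textbf{Z}$ is Hermitian, the left side is $\mathbb{E}[{\rm tr}(\textbf{Z}^2)] = \mathbb{E}\|\textbf{Z}\|_2^2$; and $\sum_{j=1}^n (\textbf{Z} - \textbf{Z}^{(j)})^2$ is a sum of squares of Hermitian matrices, hence positive semidefinite, so its trace equals its Schatten-$1$ norm and the right side equals $\frac{1}{2}\mathbb{E}\|\sum_{j=1}^n (\textbf{Z} - \textbf{Z}^{(j)})^2\|_1$, which is the claimed inequality. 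The step I expect to be the main obstacle is the first one: setting up the subalgebras $\mathcal{U}_j$ so that the conditional expectation $\mathcal{E}_j$ of Theorem \ref{pr1} genuinely integrates out $X_j$ only (rather than also tracing out the matrix coordinates), since it is precisely this that turns $\overline{\tau}(V V^{(j)}) \leq \|\mathcal{E}_j(V)\|_2^2$ into an equality; one natural device is to take the $\mathcal{U}_j$ independent amalgamated over the scalar matrices $\mathbb{M}_d(\mathbb{C})$. Everything after that point is routine bookkeeping with traces and (conditional) independence.
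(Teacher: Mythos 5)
Your proposal takes the same route as the paper, which disposes of this corollary in two lines: set $\mathcal{M}=\mathcal{L}^{\infty}(\Omega,\mathbb{P})$, note that $\textbf{Z}$ and $\textbf{Z}^{(j)}$ are identically distributed, and invoke Theorem \ref{pr1}. Your write-up is more careful than the paper's in the one place that matters: you actually verify the hypothesis $\overline{\tau}(VV^{(j)})\leq\|\mathcal{E}_j(V)\|_2^2$ via the conditional i.i.d.\ argument, whereas the paper records only equidistribution, which yields $\overline{\tau}((V^{(j)})^2)=\overline{\tau}(V^2)$ but says nothing about the first hypothesis. Your translation of the conclusion into Schatten norms is also exactly right.

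The ``obstacle'' you flag at the end, however, is a genuine gap --- in your argument and in the paper's alike --- and it deserves to be named precisely. For subalgebras $\mathcal{U}_j\subseteq\mathbb{M}_d(\mathcal{M})$ to be tensor independent with respect to $\overline{\tau}$, any self-adjoint $a\in\mathcal{U}_k\cap\mathcal{U}_l$ must satisfy $\overline{\tau}(a^2)=\overline{\tau}(a)^2$, hence be scalar; so the only admissible choice is essentially $\mathcal{U}_j=1_{\mathbb{M}_d}\otimes L^{\infty}(\sigma(X_j))$. But then the conditional expectation onto $W^*\left(\bigcup_{k\neq j}\mathcal{U}_k\right)$ takes values in \emph{scalar} matrices over $L^{\infty}(\sigma(X_k:k\neq j))$; since that algebra sits inside $\mathbb{M}_d\left(L^{\infty}(\sigma(X_k:k\neq j))\right)$, whose conditional expectation is $I_{\mathbb{M}_d}\otimes\mathbb{E}^{(j)}$, the $L_2$-contractivity of conditional expectations gives $\|\mathcal{E}_j(V)\|_2^2\leq\bigl\|(I_{\mathbb{M}_d}\otimes\mathbb{E}^{(j)})(V)\bigr\|_2^2=\overline{\tau}(VV^{(j)})$ --- the \emph{reverse} of the inequality you need, with equality only when $\mathbb{E}^{(j)}(V)$ is a.s.\ scalar. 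Your proposed remedy, amalgamation over $\mathbb{M}_d(\mathbb{C})$, is the right idea but lies outside Theorem \ref{pr1} as stated, which only knows free or tensor independence over $\mathbb{C}$. The clean repair is to bypass Theorem \ref{pr1} and rerun the martingale decomposition of Lemma \ref{le1} directly in $\mathbb{M}_d(\mathcal{M})$ with the filtration $\mathbb{M}_d\left(L^{\infty}(\sigma(X_1,\ldots,X_j))\right)$: the key identity \eqref{eqn2} holds there by classical independence of the $X_j$, the resulting $\mathcal{E}_j$ is exactly $I_{\mathbb{M}_d}\otimes\mathbb{E}^{(j)}$, and then your computation $\overline{\tau}(VV^{(j)})=\|\mathcal{E}_j(V)\|_2^2$ and the final Schatten-norm bookkeeping go through verbatim. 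In short: same approach as the paper, executed more honestly, with one foundational wrinkle that you correctly spotted but did not close --- and which the paper does not close either.
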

\begin{proof}
Note that $\textbf{Z}$ and $\textbf{Z}^{(j)}$ have the same distribution; cf \cite[page 7] {Pa}. Now the desired inequality can be deduced from Theorem \ref{pr1} by assuming $\mathcal{M} = \mathcal{L}^{\infty} \left( \Omega, \mathbb{P} \right)$.
\end{proof}
A Steele type inequality in the noncommutative setting can be stated as follows.
\begin{theorem}\label{th3}
Let $\mathcal{N}_j\, \,(1 \leq j \leq n)$ be either freely independent or tensor independent von Neumann subalgebras of $\mathcal{M}$ and $x_j \in L_1(\mathcal{N})_j$ be self-adjoint elements for any $1 \leq j \leq n$. Suppose that $f: L_1(\mathcal{M})_{sa} \times \ldots \times L_1(\mathcal{M})_{sa} \longrightarrow L_1(\mathcal{M})_{sa}$ is a function of $n$ variables and let $y= f\left(x_1, x_2, \ldots, x_n\right)$ and $y_j = f_j \left(x_1, \ldots, x_{j-1}, x_{j+1}, \ldots, x_n\right)$ for $j=1,2, \ldots, n$  be self-adjoint elements for some $(n-1)$-variables function $f_j : L_1(\mathcal{N}_1) \times \ldots \times L_1(\mathcal{N}_{j-1}) \times L_1(\mathcal{N}_{j+1}) \times \ldots \times L_1(\mathcal{N}_n) \longrightarrow L_1\left(W^*\left(\bigcup_{k \neq j} \mathcal{N}_k\right)\right)$. Then
\begin{eqnarray*}
{\rm var}(y) \leq  \sum_{j=1}^n \tau\left((y - y_j)^2\right).
\end{eqnarray*}
\end{theorem}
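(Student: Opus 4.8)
The plan is to combine the orthogonal ``martingale'' decomposition of Lemma~\ref{le1} with the best-$L_2$-approximation property of the conditional expectation. First I would observe that the proof of Lemma~\ref{le1} applies verbatim with the given von Neumann subalgebras $\mathcal{N}_j$ in place of $W^*(x_j)$: the only structural ingredient used there is the identity $\mathcal{E}_j^{\prime}\circ\mathcal{E}_j=\mathcal{E}_{j-1}^{\prime}$, whose derivation through Theorem~\ref{th4} and \cite[Proposition~2.5.5]{VDN} (and the corresponding computation in the tensor case) needs nothing more than free or tensor independence of the $\mathcal{N}_j$, which is exactly what is assumed here. This yields
\begin{equation*}
{\rm var}(y)\le\sum_{j=1}^n\tau\left(\left(y-\mathcal{E}_j(y)\right)^2\right),
\end{equation*}
where $\mathcal{E}_j$ is the conditional expectation of $\mathcal{M}$ onto $W^*\left(\bigcup_{k\ne j}\mathcal{N}_k\right)$.

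Next I would bound each summand. Since $y_j=f_j(x_1,\dots,x_{j-1},x_{j+1},\dots,x_n)$ belongs to $L_1\left(W^*\left(\bigcup_{k\ne j}\mathcal{N}_k\right)\right)$, we have $\mathcal{E}_j(y_j)=y_j$, hence
\begin{equation*}
y-\mathcal{E}_j(y)=(y-y_j)-\mathcal{E}_j(y-y_j)=(I-\mathcal{E}_j)(y-y_j).
\end{equation*}
As $y$ and $y_j$ are self-adjoint and $\mathcal{E}_j$ is $\ast$-preserving, both $y-y_j$ and $y-\mathcal{E}_j(y)$ are self-adjoint, so that $\tau\left((y-\mathcal{E}_j(y))^2\right)=\|y-\mathcal{E}_j(y)\|_2^2$ and $\tau\left((y-y_j)^2\right)=\|y-y_j\|_2^2$. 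Because $\mathcal{E}_j$ is a contractive projection on the Hilbert space $L_2(\mathcal{M})$, it is the orthogonal projection onto $L_2\left(W^*\left(\bigcup_{k\ne j}\mathcal{N}_k\right)\right)$; in particular $I-\mathcal{E}_j$ is an orthogonal projection, hence an $L_2$-contraction (equivalently, $\mathcal{E}_j(y)$ is the closest point of $L_2\left(W^*\left(\bigcup_{k\ne j}\mathcal{N}_k\right)\right)$ to $y$, and $y_j$ is one admissible competitor). Either way,
\begin{equation*}
\tau\left(\left(y-\mathcal{E}_j(y)\right)^2\right)=\|(I-\mathcal{E}_j)(y-y_j)\|_2^2\le\|y-y_j\|_2^2=\tau\left(\left(y-y_j\right)^2\right).
\end{equation*}

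Summing over $j=1,\dots,n$ and invoking the first-step decomposition gives ${\rm var}(y)\le\sum_{j=1}^n\tau\left((y-y_j)^2\right)$, which is the asserted inequality. I expect the only point requiring care to be the first step, namely checking that Lemma~\ref{le1} survives the replacement of $W^*(x_j)$ by the larger independent subalgebras $\mathcal{N}_j$; the estimate in the second step, by contrast, is automatic and needs no moment or distributional assumption on the $y_j$, which is precisely why the constant obtained here is $1$ rather than the $\tfrac12$ of Theorem~\ref{Efr1}, where the sharper constant was purchased by the extra hypotheses $\tau(yy_j^{\prime})\le\|\mathcal{E}_j(y)\|_2^2$ and $\tau((y_j^{\prime})^2)\ge\tau(y^2)$.
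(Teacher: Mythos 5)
Your proof is correct and follows essentially the same route as the paper: both arguments combine Lemma~\ref{le1} with the pointwise bound $\tau\left((y-\mathcal{E}_j(y))^2\right)\le\tau\left((y-y_j)^2\right)$, which rests on $\mathcal{E}_j(y_j)=y_j$. The only cosmetic difference is that you invoke the best-approximation (orthogonal projection) property of $\mathcal{E}_j$ on $L_2(\mathcal{M})$, whereas the paper expands both squares and identifies the difference as $\tau\left((\mathcal{E}_j(y_j)-\mathcal{E}_j(y))^2\right)\ge 0$ --- the same Pythagorean computation written out explicitly.
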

\begin{proof}
To achieve the result, first we show that
\begin{eqnarray}\label{lem2}
\tau\left((y - \mathcal{E}_j(y))^2\right) \leq \tau \left((y - y_j)^2\right).
\end{eqnarray}
For the left hand side term, we have
\begin{eqnarray*}
\tau\left((y - \mathcal{E}_j(y))^2\right) &=& \tau\left(y^2\right) -2 \tau(y \mathcal{E}_j(y)) + \tau\left(\mathcal{E}_j(y)^2\right)\\
&=& \tau\left(y^2\right) -2 \tau\left(\mathcal{E}_j ( y \mathcal{E}_j(y))\right) + \tau\left(\mathcal{E}_j(y)^2\right)\\
&& \qquad\qquad (\text{by (iii) of properties of the conditional expectation})\\
&=& \tau\left(y^2\right) -2 \tau\left(\mathcal{E}_j(y)^2\right) + \tau\left(\mathcal{E}_j(y)^2\right)\\
&& \qquad\qquad (\text{by (ii) of properties of the conditional expectation})\\
&=& \tau\left(y^2\right) - \tau\left(\mathcal{E}_j(y)^2\right),
\end{eqnarray*}
and for the right hand side term, we have
\begin{eqnarray*}
\tau \left((y - y_j)^2\right) &=& \tau\left(y^2\right) - 2 \tau(yy_j) + \tau\left(y_j^2\right)\\
&=& \tau\left(y^2\right) - 2 \tau\left(\mathcal{E}_j(yy_j)\right) + \tau\left(y_j^2\right)\\
&=& \tau\left(y^2\right) - 2 \tau\left(\mathcal{E}_j(y)\mathcal{E}_j(y_j)\right) + \tau\left(y_j^2\right)\\
&=& \tau\left(y^2\right) - 2 \tau\left(\mathcal{E}_j(y)\mathcal{E}_j(y_j)\right) + \tau\left(\mathcal{E}_j(y_j)^2\right).
\end{eqnarray*}
Now inequality \eqref{lem2} follows from
\begin{eqnarray*}
\tau \left((y - y_j)^2\right)  - \tau\left((y - \mathcal{E}_j(y))^2\right) &=&
 \tau\left(y^2\right) - 2 \tau\left(\mathcal{E}_j(y)\mathcal{E}_j(y_j)\right) + \tau\left(\mathcal{E}_j(y_j)^2\right)\\
&& \quad  - \left( \tau\left(y^2\right) - \tau\left(\mathcal{E}_j(y)^2\right) \right)\\
&=& \tau \left( \mathcal{E}_j(y_j)^2 - 2 \mathcal{E}_j(y_j)\mathcal{E}_j(y) + \mathcal{E}_j(y)^2 \right)\\
&=& \tau\left((\mathcal{E}_j(y_j) - \mathcal{E}_j(y))^2 \right)\\
&\geq & 0.
\end{eqnarray*}
It follows from Lemma \ref{le1} and inequality \eqref{lem2} that
\begin{eqnarray*}
{\rm var}(y) \leq  \sum_{j=1}^n \tau\left((y - \mathcal{E}_j(y))^2\right)\leq  \sum_{j=1}^n \tau \left(\left(y - y_j)\right)^2\right).
\end{eqnarray*}
\end{proof}
The following result gives a norm inequality being interesting on its own right.
\begin{proposition}
Let $x_j$ $\, \, (1 \leq j \leq n)$ be either freely independent or tensor independent self-adjoint elements . Then
\begin{eqnarray*}
\left\| \sum_{j=1}^n x_j \right\|_2^2 \leq \left\| \sum_{j=1}^n x_j \right\|_1^2 + \sum_{j=1}^n \| x_j \|_2^2.
\end{eqnarray*}
\end{proposition}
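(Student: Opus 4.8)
The plan is to apply Lemma~\ref{le1} to the function $f(a_1,\ldots,a_n)=\sum_{j=1}^n a_j$, which maps $L_1(\mathcal{M})_{sa}\times\cdots\times L_1(\mathcal{M})_{sa}$ into $L_1(\mathcal{M})_{sa}$. Put $S:=\sum_{j=1}^n x_j$ and $\mathcal{N}_j:=W^*(x_j)$. If some $\|x_i\|_2=\infty$ there is nothing to prove, so we may assume each $x_i$, hence also $S$, lies in $L_2(\mathcal{M})$. Since the $x_j$ are self-adjoint and either freely or tensor independent, Lemma~\ref{le1} gives
\[
{\rm var}(S)\le\sum_{i=1}^n\tau\!\left((S-\mathcal{E}_i(S))^2\right),
\]
so the first task is to identify the summands on the right.

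By property~(ii) of the conditional expectation, $\mathcal{E}_i(x_j)=x_j$ for $j\neq i$, since then $x_j\in W^*\!\left(\bigcup_{k\neq i}\mathcal{N}_k\right)$. The one identity with genuine content is $\mathcal{E}_i(x_i)=\tau(x_i)\,1$. To see it, note that $\mathcal{N}_i$ and $W^*\!\left(\bigcup_{k\neq i}\mathcal{N}_k\right)$ are freely (resp.\ tensor) independent --- by \cite[Proposition~2.5.5]{VDN} in the free case and by the argument given in the proof of Lemma~\ref{le1} in the tensor case --- so $\tau(x_ib)=\tau(x_i)\tau(b)$ for every $b\in W^*\!\left(\bigcup_{k\neq i}\mathcal{N}_k\right)$. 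Hence $\tau(\mathcal{E}_i(x_i)b)=\tau(\mathcal{E}_i(x_ib))=\tau(x_ib)=\tau(\tau(x_i)1\cdot b)$ for all such $b$; since both $\mathcal{E}_i(x_i)$ and $\tau(x_i)1$ lie in $W^*\!\left(\bigcup_{k\neq i}\mathcal{N}_k\right)$, faithfulness of $\tau$ forces $\mathcal{E}_i(x_i)=\tau(x_i)1$. Consequently $S-\mathcal{E}_i(S)=x_i-\tau(x_i)1$, so $\tau\!\left((S-\mathcal{E}_i(S))^2\right)={\rm var}(x_i)=\|x_i\|_2^2-\tau(x_i)^2\le\|x_i\|_2^2$, and therefore ${\rm var}(S)\le\sum_{i=1}^n\|x_i\|_2^2$.

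It remains to convert this variance bound into the stated $L_2$--$L_1$ inequality. Using ${\rm var}(S)=\tau(S^2)-\tau(S)^2=\|S\|_2^2-\tau(S)^2$ together with $|\tau(S)|=|\tau(S\cdot1)|\le\|S\|_1\,\|1\|_\infty=\|S\|_1$, we obtain
\[
\|S\|_2^2={\rm var}(S)+\tau(S)^2\le\sum_{i=1}^n\|x_i\|_2^2+\|S\|_1^2,
\]
which is exactly the assertion.

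I expect the identity $\mathcal{E}_i(x_i)=\tau(x_i)1$ --- that conditioning on all the other variables collapses $x_i$ to its expectation --- to be the only non-routine point; everything else is bookkeeping with the trace, and since this identity is essentially embedded in the proof of Lemma~\ref{le1}, the whole argument is short. Alternatively, Theorem~\ref{th3} applied with $f_j(\ldots)=\sum_{k\neq j}x_k$ yields ${\rm var}(S)\le\sum_{j}\tau(x_j^2)$ directly; routing through Lemma~\ref{le1} is marginally cleaner and even produces the sharper bound $\sum_{j}{\rm var}(x_j)$.
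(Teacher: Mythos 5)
Your proof is correct and follows essentially the same route as the paper: both rest on Lemma~\ref{le1} applied to $f(a_1,\ldots,a_n)=\sum_j a_j$, the only cosmetic difference being that you compute $\mathcal{E}_i(S)=\sum_{j\neq i}x_j+\tau(x_i)1$ directly (yielding the marginally sharper intermediate bound $\operatorname{var}(S)\le\sum_i\operatorname{var}(x_i)$), whereas the paper invokes inequality~\eqref{lem2} with $y_j=S-x_j$ to reach $\operatorname{var}(S)\le\sum_j\tau(x_j^2)$. In fact you are more complete than the paper, whose proof stops at the variance bound and omits the final conversion $\|S\|_2^2=\operatorname{var}(S)+\tau(S)^2\le\operatorname{var}(S)+\|S\|_1^2$ that you spell out.
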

\begin{proof}
Put $y := S_n = \sum_{j=1}^n x_j$.
Lemma \ref{le1} and inequality \eqref{lem2} in the proof of Theorem \ref{th3} with $y_j = y - x_j$ yield that
\begin{eqnarray*}
{\rm var}(y) \leq \sum_{j=1}^n \tau\left((y - \mathcal{E}_j(y))^2\right)
\leq \sum_{j=1}^n \tau \left(\left(y - (y- x_j)\right)^2\right)
= \sum_{j=1}^n \tau \left(x_j^2\right)\,.
\end{eqnarray*}
\end{proof}

\begin{remark}
Another perspective of research concerns discussing the results for the $q$-Guassian random variables; cf. \cite{BEH}. It is interesting to provide a version of Efron--Stein inequality for the $q$-Guassian random variables coming from Coxeter groups of type B. We leave this as a problem for the interested readers.
\end{remark}

\end{document}